\documentclass[sn-mathphys,Numbered]{sn-jnl}


\usepackage{graphicx}%
\usepackage{multirow}%
\usepackage{amsmath,amssymb,amsfonts}%
\usepackage{amsthm}%
\usepackage{mathrsfs}%
\usepackage[title]{appendix}%
\usepackage{xcolor}%
\usepackage{textcomp}%
\usepackage{manyfoot}%
\usepackage{booktabs}%
\usepackage{algorithm}%
\usepackage{algorithmicx}%
\usepackage{algpseudocode}%
\usepackage{listings}%



\theoremstyle{thmstyleone}%
\newtheorem{theorem}{Theorem}
\newtheorem{corollary}[theorem]{Corollary}%
%

\theoremstyle{thmstyletwo}%
\newtheorem{example}{Example}%
\newtheorem{remark}{Remark}%
\theoremstyle{thmstylethree}%
\newtheorem{definition}{Definition}%

\raggedbottom

\begin{document}

\title[Article Title]{Mathematical Programs Using Tangential Subdifferentials}


\author[1]{\fnm{Shashi Kant} \sur{Mishra}}\email{shashikant.mishra@bhu.ac.in}

\author*[1]{\fnm{Dheerendra} \sur{Singh}}\email{singhdheeren077@gmail.com}
\equalcont{These authors contributed equally to this work.}

\equalcont{These authors contributed equally to this work.}

\affil*[1]{\orgdiv{Department of Mathematics},  \orgaddress{\street{Institute of Science}, \orgname{Banaras Hindu University},  \city{Varanasi}, \postcode{221005}, \state{Uttar Pradesh}, \country{India}}}




\abstract{In this paper, we deal with constraint qualifications, the stationary concept and the optimality conditions for nonsmooth mathematical programs with equilibrium constraints. The main tool of our study is the notion of tangential subdifferentials. Using the notion of tangential subdifferentials, we present constraint qualifications (namely, generalized standard Abadie, MPEC Abadie, MPEC Zangwill, constraint qualifications) and stationary concepts, and also establish relationships between constraint qualifications. Further, we establish sufficient optimality conditions for mathematical programs using tangential subdifferentials and suitable generalized convexity notion. We also give some examples that verify our results.
}

\keywords{Mathematical programs, Constraint qualifications, Stationary point, Nonsmooth analysis, Tangential subdifferentials}


\pacs[MSC Classification]{90C26, 90C46, 49K99}

\maketitle

\section{Introduction}\label{sec1}
Mathematical programming with equilibrium constraints (MPEC) is one of the core topics in optimization which was coined by Harker and Pangn \cite{harker-pang1988} in 1988, after Luo et al. \cite{luo-pang-ralph-wu1996} presented a comprehensive study in 1996. There are several different approaches to reformulate MPEC. The stationarity notion (first-order optimality condition) is a very important concept for MPEC, which was first presented by Outrata \cite{Outrata1999},   Scheel et al. \cite{Scheel-Scholtes2000}. There are several types of stationary conditions. Among all the stationary notions, Strong stationary (S-stationary)~\cite{Pang-Fukushima1999,Scheel-Scholtes2000} is one of the strongest stationary point notions for MPEC, because it is similar to standard KKT conditions in standard nonlinear MPEC programs \cite{flegel-kanzow2005}. Then the second most strong stationary condition is Mordukhovich stationary (M-stationary) \cite{guu-mishra-pandey2016,Outrata1999,pandey-mishra2016(c)}. There are also some weaker stationary conditions like Alternative stationary (A-Stationary), Clarke stationary (C-Stationary) \cite{Scheel-Scholtes2000,flegel-kanzow2003}. The `A' might stand for `Alternative' because that describes the properties of Lagrange multiplier or Abadie because it is first occurred when MPEC-ACQ was extended to MPEC see \cite{flegel-kanzow2005}.

There are some special constraint properties called constraint qualifications (Scheel and Scholtes in \cite{Scheel-Scholtes2000}), which are used to find necessary optimality conditions of nonlinear programming problems. In \cite{flegel-kanzow2003} we can see that S-stationary is a necessary optimality condition under  MPEC form of the strict Mangasarian- Fromovitz constraint qualifications \cite{kyparisis1985}. Also in \cite{flegel-kanzow2005,flegel-kanzow-outrata2006} Flegel, Kanzow and Outrata considered various constraint qualifications which confirms being S-stationry point for MPEC in smooth data.

The class of tangentially convex functions was first introduced by Pshenichnyi \cite{phenichnyi1971} almost half a century ago, then comprehensively studied by Lemarechal \cite{lemarechal1986}, he also coined the term `tangentially convex'. The class of tangentially convex functions is very large. It includes all convex functions with open domain, every \'{G}ateaux differentiable functions \cite{laha-singh-pandey-mishra2022} and Clarke regular functions \cite{clarke1983}, MP regular functions \cite{Michel-Penot1992}. In particular, the sum of the convex function and differentiable function gives an example of tangentially convex functions, which are neither convex nor differentiable, in general. Tangential subdifferentials \cite{martinezlegaz2015} is subdifferential for the class of tangentially convex functions. Tangential subdifferentials enjoy rich calculus rules. For instance, given two functions $f$ and $g$ which are tangentially convex at a common point x, one has $\partial^{T}( f + g) (x) = \partial^{T} f (x) + \partial^{T} g (x)$ this additivity property easily follows from the relationship between tangential subdifferentials and directional derivatives, taking into account that the equality $( f + g)^{'}(x, d) = f^{'}(x, d) + g^{'}(x, d)$ holds for every $d\in \mathbb{R}^{n}.$ Tangential subdifferentials is used in optimization mainly in connection with optimality conditions see in~\cite{allevi-ligaz2020,hiriart-ledyaev1996,martinezlegaz2015,mashkoorzadeh-movahedian2019,sisarat-wangkeeree2020,sisarat-wangkeeree2018}.

Several research related to optimality conditions of MPEC have already been done in both cases smooth and nonsmooth see in \cite{flege-kanzow2005,flegel-kanzow-outrata2006,jeyakumar-luc1998,kyparisis1985,li-zhang2006,joshi-mishra2020,singh-pandey-mishra2017,mishra-jaiswal2015}. Our work is inspired by recent work done in \cite{Ansari-movahedian-nobakhtian2016,dutta-chnadra2004}, where authors used convexificators to find optimality conditions and constraint qualifications for nonsmooth MPEC. Convexificators are always closed sets, but not necessarily convex or compact sets, unlike tangential subdifferentials which are  always compact or convex sets. Here we establish optimality conditions for nonsmooth mathematical programming problems with equilibrium constraints for the class of tangentially convex functions using tangential subdifferentials and suitable generalized convexity assumptions. We also present several Abadie-types constraint qualifications and stationary points in terms of tangential subdifferentials and obtain the relationship between them.

This paper is put in order as follows: in section 2, we recall some basic and fundamental definitions also we give some notations and preliminary results that will be used in this paper. Section 3, this section is devoted to constraint qualifications for MPEC and the relations between them. In section 4, we obtain some optimality conditions under generalized convexity assumptions based on tangential subdifferentials. We also present several MPEC stationary point conditions using tangential subdifferentials. Also, we give some examples that verify our results.

\section{Preliminaries}\label{sec2}
In this section, we recall some basic and important definitions that will be used in this sequel.
Throughout in this sequel, $R^{n}$ is usual n-dimensional Euclidean space. We consider $K$ is the nonempty subset of $\mathbb{R}^{n}.$ The convex hull of $K$ is $coK$, the closure of $K$ is $clK$ and the convex cone (includes origin of $R^{n}$) generated by $K$ is  $conK$. The negative and strictly negative polar cones are denoted by $K^{-}$ and $K^{--}$ respectively, for more details \cite{Borwein-Lewis2010,Mordukhovich2006}.

$$ K^{-}=\{u \in R^{n}: \langle u,k \rangle  \leq 0, \ \  \forall k \in K \},$$

$$K^{--}=\{u \in R^{n}: \langle u,k \rangle < 0, \ \  \forall k \in K \}.$$
The contingent cone $T(K,k)$ and regular normal cone $N^{\perp}(K,k)$ at $k \in cl K$ are defined, respectively, as

$ T(K,k)=\{u \in \mathbb{R}^{n}: \exists t_{z} \rightarrow 0 \ \ and \ \ u_{z} \rightarrow u \ \ such \ \ that \ \ k + t_{z}u_{z} \in K  \ \  \forall z \},$\\

$ N^{\perp}(K,k)= T(K,k)^{-} =\{ v \in \mathbb{R}^{n}: \langle u,v \rangle \leq 0 \ \  \forall u \in T(K,k) \}.$      

\begin{definition}
	A function $ \mathcal{J}:\mathbb{R}^{n}\rightarrow \mathbb{R} \cup \{+\infty \} $ is said to be a directionally differentiable at $k \in \mathbb{R}^{n}$ in direction of $d \in \mathbb{R}^{n}$ if and only if the limit
	$$ \mathcal{J}{'}(k,d)=\lim_{h\downarrow 0} \frac{ \mathcal{J}(k + hd ) -\mathcal{J}(k)}{h}$$
	exist and it is finite. Function  $\mathcal{J}$ is said to be a directionally differentiable at $k$ if and only if its directional derivative $  \mathcal{J}{'}(k, d) $ exists and finite for all direction $ d \in \mathbb{R}^{n}.$	
\end{definition}
\begin{definition} \cite{martinezlegaz2015}
	A function  $\mathcal{J}$ is callled tangentially convex at $k \in \mathbb{R}^{n}$ if for every $d \in \mathbb{R}^{n},  \mathcal{J}^{'}(k,d)$ exists, finite and the function $ \mathcal{J}^{'}(k,.):\mathbb{R}^{n} \rightarrow \mathbb{R}$ is convex function in variable $d.$
	Note that, we know that $ \mathcal{J}^{'}(k,d)$ is positively homogeneous so if $ \mathcal{J}^{'}(k,d)$ is tangentially convex at k, then $\mathcal{J}^{'}(k,d)$ is sublinear.
\end{definition}
\begin{definition} \cite{martinezlegaz2015}
	Suppose the function $ \mathcal{J} :\mathbb{R}^{n} \rightarrow \mathbb{R}$ is a tangentially convex function at $k \in \mathbb{R}^{n}.$ Then nonempty convex compact set $\partial^{T} \mathcal{J}(k),$ which is subset of  $\mathbb{R}^{n}$ is called tangential subdifferential  of  $\mathcal{J}$ at k, such that
	$$ \partial^{T} \mathcal{J}(k)=\{ k^{*} \in \mathbb{R}^{n}: \langle k^{*},d \rangle \leq  \mathcal{J}^{'}(k,d), \ \ \forall d \in \mathbb{R}^{n}\} $$
	which is equivalent to $ \mathcal{J}^{'}(k,d)=max_{k^{*} \in \partial^{T} \mathcal{J}(k)}\langle k^{*},d \rangle.$
\end{definition}
\begin{remark}
	If function is tangentially convex at any point, then there exist $\partial^{T}(.)$ tangential subdifferentials, which is nonempty, compact and convex set \cite{martinezlegaz2015}.
\end{remark}
\begin{example}
	Suppose $ \mathcal{J}:\mathbb{R}^{2} \rightarrow \mathbb{R}$ is real valued function s.t.
	\[
	\mathcal{J}(k_{1},k_{2})=\left\{ \begin{array}{ll}
	\frac{k_{1}^{3}}{k_{2}}+k_{1}, & \ \ (k_{1},k_{2}) \neq (0,0) \\
	0 & k_{1}=0 \ \ or \ \  k_{2}=0 \\
	\end{array}
	\right.
	\]
	Clearly that function  $\mathcal{J}$ is not continuous at 0=(0,0). The directional derivative of  $\mathcal{J}$ at k=(0,0) in direction $d=(d_{1},d_{2})$ is $ \mathcal{J}^{'}(0,d)=d_{1}$ which is convex function. So  $\mathcal{J}$ is tangentially convex function at k=(0,0). Then tangential subdifferential of function  $\mathcal{J}$ at k=(0,0), is 
	$$\partial^{T}\mathcal{J}(0)=\{ (1,0)\}.$$
\end{example}
\begin{example}
	$ \mathcal{J}: \mathbb{R}^{2} \rightarrow \mathbb{R}$ is defined as $ \mathcal{J}(k_{1},k_{2})=|k_{1}|+k_{2}^{2},$ is a tangentially convex function at k=(0,0) then $ \mathcal{J}^{+}(0,d)=|d_{1}|$  and $ \mathcal{J}^{-}(0,d)=|d_{1}|$ are upper and lower Dini derivatives of function respectively in the direction of $d=(d_{1} d_{2})$, also the directional derivative of the function is $ \mathcal{J}(0,d)=|d_{1}|$.
	
	Tangential subdifferentials of the function $\mathcal{J}$ at k=(0,0) is $$\partial^{T} \mathcal{J}(0)=\{(1,0),(-1,0),(0,0)\}.$$
\end{example}
Now we will define generalized convexity notion for tangentially convex functions, as $\partial^{T}$-pseudoconvexity and  $\partial^{T}$-quasiconvexity.
\begin{definition} 
	Let $ \mathcal{J}:\mathbb{R}^{n} \rightarrow \mathbb{R} \cup \{\infty\}$ be a tangentially convex function at $k \in \mathbb{R}^{n}$, then  $\mathcal{J}$ is said to be a $\partial^{T}$-pseudoconvex function at k if, for all $t \in \mathbb{R}^{n}$ such that 
	$$  \mathcal{J}(t) <  \mathcal{J}(k) \ \ implies \ \  that \ \ \langle \xi, t-k \rangle < 0, \ \  \forall \xi \in \partial^{T} \mathcal{J}(k).$$
	Function  $\mathcal{J}$ is said to be $\partial^{T}$-pseudoconcave function at k if, - $\mathcal{J}$ is $\partial^{T}$-pseudoconvex at k.
\end{definition}
\begin{definition}
	Let $ \mathcal{J}:\mathbb{R}^{n} \rightarrow \mathbb{R} \cup \{\infty\}$ be a tangentially convex function at $k \in \mathbb{R}^{n}$, then  $\mathcal{J}$ is said to be a $\partial^{T}$-quasiconvex function at k if, for all $t \in \mathbb{R}^{n}$ such that
	$$  \mathcal{J}(t) \leq  \mathcal{J}(k) \ \ implies \ \ that \ \ \langle \xi, t-k \rangle \leq 0, \ \ \forall \xi \in \partial^{T} \mathcal{J}(k).$$
	$\mathcal{J}$ is said to be $\partial^{T}$-quasiconcave function at k if,  $-\mathcal{J}$ is $\partial^{T}$-quasiconvex at k.	
\end{definition}
\begin{example}
	$\mathcal{J}:\mathbb{R}^{2} \rightarrow \mathbb{R}$ such that $\mathcal{J}(k)= -e^{k_{1}+k_{2}}$, which is not convex but tangentially convex and $\partial^{T}$-pseudoconvex function at 0=(0,0).
\end{example}
Mathematical programming problem with equilibrium constraints MPEC,
\begin{align*}
(MPEC)(P) \ \ &min \ \  \mathcal{J}(k) \\
&\ \ s.t. \ \   \ell(k) \leq 0,  \  \  \hbar(k)=0, \\
&\ \ \ \ G(k)\geq 0,   \ \  H(k) \geq 0, \ \ G(k)^{T}H(k)=0,
\end{align*}
where  $ \mathcal{J}:\mathbb{R}^{n} \rightarrow \mathbb{R},  \ \   \ell:\mathbb{R}^{n} \rightarrow \mathbb{R}^{p},  \ \  \hbar:\mathbb{R}^{n} \rightarrow \mathbb{R}^{q},  \ \  G:\mathbb{R}^{n} \rightarrow \mathbb{R}^{m},  \ \  H:\mathbb{R}^{n} \rightarrow \mathbb{R}^{m}$ are tangentially convex functions.\\
Then the feasible solution set of MPEC is defined as,
$$K:=\{k \in \mathbb{R}^{n} | \ell(k)\leq 0, \ \ \hbar(k)=0, \ \ G(k)\geq 0, \ \ H(k)\geq 0,\ \ G(k)^{T}H(k)=0 \}.$$
For any feasible point $k^{*} \in K$ we define the index set as follow:\\
$P_{ \ell}:=\{ i:  \ell_{i}(k^{*})=0 \},$\\
$\Theta :=\Theta(k^{*}):=\{ i: G_{i}(k^{*})=0, \ \ H_{i}(k^{*}) > 0\},$ \\
$\Omega :=\Omega(k^{*}):=\{ i: G_{i}(k^{*})=0,\ \ H_{i}(k^{*}) = 0 \},$ \\
$\Upsilon := \Upsilon(k^{*}) :=\{ i: G_{i}(k^{*})>0,\ \ H_{i}(k^{*}) = 0 \},$\\
where $\Omega$ is called degenerate set. If $\Omega$ is empty that means vector $k^{*}$ satisfies the strict complementarity condition. In this sequel we will discuss on nonempty $\Omega$.

\section{Constraint Qualifcations}\label{sec3}
This section is devoted to Abadie-types constraint qualifications and some other constraint qualifications \cite{Ansari-movahedian-nobakhtian2016}. We present constraint qualifications in terms of the tangential subdifferentials with nonsmooth data. MPEC Abadie constraint qualifications (ACQ) was first introduced by Flegal et al. for smooth MPEC in \cite{flegel-kanzow2002}, then Movahedian et al. introduced MPEC Abadie constraint qualifications for nonsmooth MPEC case in \cite{Movahedian-Nobakhtian2009}. Finally, we will establish some relationships between these constraint qualifications.

Now, we will recall some notations for a class of tangentially convex functions, in terms of tangential subdifferentials.

$ \ell= \bigcup_{i \in I_{ \ell}} \ \ \partial^{T}  \ell_{i}(k^{*}),$\\

$ \hbar= \bigcup_{i=1}^{q} \ \ \partial^{T}   \hbar_{i}(k^{*})\cup \partial^{T}(- \hbar_{i})(k^{*}), $\\

$G_{\Theta}= \bigcup_{i \in \Theta} \ \ \partial^{T}  G_{i}(k^{*})\cup \partial^{T}(-G_{i})(k^{*}), $\\

$G_{\Omega}= \bigcup_{i \in \Omega} \ \ \partial^{T}  G_{i}(k^{*}),$\\

$H_{\Upsilon}= \bigcup_{i \in \Upsilon}\ \ \partial^{T}  H_{i}(k^{*})\cup \partial^{T}(-H_{i})(k^{*}), $\\

$H_{\Omega}= \bigcup_{i \in \Omega}\ \ \partial^{T}  H_{i}(k^{*}),$\\

$(GH)_{\Omega}= \bigcup_{i \in \Omega} \ \ \partial^{T} (-G_{i})(k^{*})\cup \partial^{T}(-H_{i})(k^{*}), $\\

$$ \Pi(k^{*}):=  \ell^{-} \cap  \hbar^{-} \cap G^{-}_{\Theta} \cap H^{-}_{\Upsilon} \cap(GH)^{-}_{\Omega},$$
$$ \Psi(k^{*})= \ell^{-} \cap  \hbar^{-} \cap G^{-}_{\Theta} \cap H^{-}_{\Upsilon} \cap(GH)^{-}_{\Omega} \cap (G^{-}_{\Omega} \cup H^{-}_{\Omega}). $$

Now by using the above notations, we present following constraint qualifications \cite{Ansari-movahedian-nobakhtian2016} in terms of tangential subdifferentials.
\begin{definition}\textbf{(GS Abadie Constraint Qualification)}
	Suppose $k^{*}$ is a feasible point of MPEC problems, and all constraint functions are tangentially convex at $k^{*}$. Then generalized standard Abadie constraint qualification holds at $k^{*}$, if at least one of the dual sets used in the definition of $\Pi(k^{*})$  is nonempty and holds
	$$\Pi(k^{*}) \subset T(K,k^{*}).$$
\end{definition}
\begin{definition}\textbf{(MPEC Abadie Constraint Qualification)}
	Suppose $k^{*}$ is a feasible point of MPEC problems, and all constraint functions are tangentially convex at $k^{*}$. Then MPEC Abadie constraint qualification holds at $k^{*}$, if at least one of the dual sets used in the definition of $\Psi(k^{*})$  is nonempty and holds
	$$\Psi(k^{*}) \subset T(K,k^{*}).$$	
\end{definition}
Since $\Pi(k^{*}) \subset \Psi(k^{*})$, GS Abadie constraint qualification is also MPEC Abadie constraint qualification.\

Now, we present some other constraint qualifications, the MPEC Zangwill \cite{Zangwill1969} and MPEC weak reverse convex \cite{ye-2005} constraint qualifications, using tangential subdifferentials. For this we recall some notation from nonlinear optimization.\\
The cone of feasible direction of K at $k^{*}$ is defined as:
\begin{equation*}
Dcon(K,k^{*})=\{d \in \mathbb{R}^{n} : \exists \delta > 0 \ \ s.t.  \ \ k^{*}+\lambda d \in K, \ \  \forall \lambda \in (0, \delta) \}.
\end{equation*}

The cone of attainable direction of $K$ at $k^{*}$ is given by

\begin{align*}
Acon(K,k^{*})=\Bigg\{\ d \in \mathbb{R}^{n} : \exists \delta > 0 \  and  \alpha :\mathbb{R} \rightarrow \mathbb{R}^{n} \ \ s.t. \ \ \alpha(\lambda) \in K, \\  \forall \lambda \in (0, \delta), \ \alpha(0)=0, \ \ lim_{\lambda \downarrow 0} \frac{\alpha(\lambda)-\alpha(0)}{\lambda}=d \Bigg\}.
\end{align*}

\begin{definition}\textbf{MPEC Zangwill Constraint Qualifications}
	Suppose $k^{*}$ is a feasible point of MPEC. Let all constraint functions be tangentially convex at $k^{*}$. Then we say that the MPEC Zangwill constraint qualification is satisfied at $k^{*}$  if at least one of the dual sets used in definition $\Psi(k^{*})$ is nonzero and $$\Psi(k^{*}) \subseteq cl Dcon(K,k^{*}).$$	
\end{definition}
	Clearly $Dcon(K,k^{*}) \subseteq  Acon(K,k^{*}) \subseteq T(K,k^{*})$ and $T(K,k^{*})$ is closed, that means,
\begin{equation*}
MPEC~~Zangwill \ \ CQ \implies MPEC~~Abadie\ \ CQ.
\end{equation*}	
There are some other constraint qualifications in MPEC, like weak reverse convex constraint qualification which is expressed in terms of generalized convexity notion.
Here to define these constraint qualifications, we use new generalized convexity assumption based on tangential subdifferentials.
\begin{definition}\textbf{(MPEC Weak Reverse Convex Constraint Qualification)}
	Suppose $k^{*}$ is a feasible point of MPEC(P). Then $k^{*}$ is said to be satisfy reverse convex constraint qualification if, $\ell_{i} (i \in I_{\ell})$ are $\partial^{T}$-pseudoconcave at $k^{*}$, $ \ell_{i} (i \notin I_{ \ell})$, $G_{i} (i \in \Upsilon)$ and $H_{i} (i \in \Theta)$ are continuous at $k^{*}$ and $h_{i}(i=1...,q), G_{i} (i \in \Theta \cup \Omega)$ and $H_{i} (i \in \Upsilon \cup \Omega)$ are $\partial^{T}$- pseudoaffine at $k^{*}.$
\end{definition}
\begin{theorem}
	Every MPEC weak reverse convex constraint qualification is also MPEC Zangwill constraint qualification.
\end{theorem}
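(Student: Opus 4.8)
The plan is to prove the (a priori stronger) inclusion $\Psi(k^{*}) \subseteq Dcon(K, k^{*})$; since $Dcon(K, k^{*}) \subseteq cl\,Dcon(K, k^{*})$ and the nonemptiness clause of the MPEC Zangwill CQ is automatic (for a feasible point of an MPEC one has $\{1,\ldots,m\}=\Theta\cup\Omega\cup\Upsilon$ and all the tangential subdifferentials involved are nonempty, so at least one of the dual sets in $\Psi(k^{*})$ is nonempty), this would give the MPEC Zangwill CQ. So I would fix an arbitrary $d \in \Psi(k^{*})$ and produce $\delta > 0$ with $k^{*} + \lambda d \in K$ for every $\lambda \in (0, \delta)$, checking each defining relation of $K$ separately and then taking the minimum of the finitely many positive thresholds that arise.

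The relations coming from the inactive inequalities $\ell_{i}$ ($i \notin I_{\ell}$) and from the strictly positive partner of each complementarity pair ($H_{i}$ for $i \in \Theta$, $G_{i}$ for $i \in \Upsilon$) are handled by continuity alone: since $\ell_{i}(k^{*}) < 0$, $H_{i}(k^{*}) > 0$ and $G_{i}(k^{*}) > 0$, continuity at $k^{*}$ preserves these on some interval $(0, \delta_{i})$. For the active inequalities $\ell_{i}$ ($i \in I_{\ell}$), $d \in \ell^{-}$ gives $\ell_{i}'(k^{*}, d) = \max_{\xi \in \partial^{T}\ell_{i}(k^{*})}\langle \xi, d\rangle \le 0$, hence $(-\ell_{i})'(k^{*}, d) \ge 0$, so some $\eta \in \partial^{T}(-\ell_{i})(k^{*})$ has $\langle \eta, d\rangle \ge 0$; the contrapositive of the $\partial^{T}$-pseudoconcavity of $\ell_{i}$ (that is, the $\partial^{T}$-pseudoconvexity of $-\ell_{i}$), used at $t = k^{*}+\lambda d$, then yields $\ell_{i}(k^{*}+\lambda d) \le \ell_{i}(k^{*}) = 0$ for all $\lambda > 0$. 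For the equalities $\hbar_{i}$, and for $G_{i}$ ($i \in \Theta$) and $H_{i}$ ($i \in \Upsilon$), the membership of $d$ in $\hbar^{-}$ (respectively $G_{\Theta}^{-}$, $H_{\Upsilon}^{-}$) bounds $\langle\cdot,d\rangle \le 0$ both over $\partial^{T}f(k^{*})$ and over $\partial^{T}(-f)(k^{*})$, which forces $f'(k^{*}, d) = 0$; $\partial^{T}$-pseudoaffineness of $f$ then gives both $f(k^{*}+\lambda d) \le f(k^{*})$ and $f(k^{*}+\lambda d) \ge f(k^{*})$, i.e. $f(k^{*}+\lambda d) = f(k^{*})$ for all $\lambda > 0$, which is exactly the feasibility of that constraint (and, for $i \in \Theta\cup\Upsilon$, also makes the complementarity product vanish).

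The step I expect to be the real obstacle is the degenerate set $\Omega$. There $d \in (GH)_{\Omega}^{-}$ only yields the one-sided information $G_{i}'(k^{*}, d) \ge 0$ and $H_{i}'(k^{*}, d) \ge 0$ (from $(-G_{i})'(k^{*},d) \le 0$ and $(-H_{i})'(k^{*},d) \le 0$), which by itself cannot keep $G_{i}H_{i} = 0$ along the ray. The extra leverage is the disjunction $d \in G_{\Omega}^{-} \cup H_{\Omega}^{-}$: suppose $d \in G_{\Omega}^{-}$ (the case $d \in H_{\Omega}^{-}$ being identical with the roles of $G$ and $H$ swapped). Then $G_{i}'(k^{*}, d) \le 0$ as well, so $G_{i}'(k^{*}, d) = 0$, and $\partial^{T}$-pseudoaffineness of $G_{i}$ ($i \in \Theta\cup\Omega$) pins $G_{i}(k^{*}+\lambda d) = G_{i}(k^{*}) = 0$ for all $\lambda > 0$; meanwhile $H_{i}'(k^{*}, d) \ge 0$ gives some $\xi \in \partial^{T}H_{i}(k^{*})$ with $\langle \xi, d\rangle \ge 0$, whence $\partial^{T}$-pseudoconvexity of $H_{i}$ ($i \in \Upsilon\cup\Omega$) yields $H_{i}(k^{*}+\lambda d) \ge H_{i}(k^{*}) = 0$ for all $\lambda > 0$. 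Thus $G_{i}(k^{*}+\lambda d) = 0 \ge 0$, $H_{i}(k^{*}+\lambda d) \ge 0$, and their product is zero. Finally, letting $\delta$ be the minimum of the finitely many $\delta_{i}$ from the continuity steps (all the remaining relations being valid for every $\lambda > 0$), we get $k^{*} + \lambda d \in K$ on $(0, \delta)$, so $d \in Dcon(K, k^{*})$; since $d \in \Psi(k^{*})$ was arbitrary, $\Psi(k^{*}) \subseteq Dcon(K, k^{*}) \subseteq cl\,Dcon(K, k^{*})$, which together with the nonemptiness remark above is the MPEC Zangwill CQ. The only genuinely delicate points are to apply the $\partial^{T}$-pseudoconvexity/pseudoconcavity hypotheses in their contrapositive form and, in the $\Omega$-analysis, to track which of $G_{i}$, $H_{i}$ gets pinned to zero.
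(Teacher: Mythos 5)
Your proposal is correct and follows essentially the same route as the paper: fix $d\in\Psi(k^{*})$, pass from the polar-cone conditions to signs of directional derivatives, invoke the contrapositives of $\partial^{T}$-pseudoconcavity/pseudoaffineness for the active constraints and plain continuity for the inactive ones, and conclude $d\in Dcon(K,k^{*})$. In fact your treatment is more complete than the paper's, which compresses everything after the $\ell_{i}$ case into ``by applying the same argument''; your explicit use of the disjunction $d\in G_{\Omega}^{-}\cup H_{\Omega}^{-}$ to pin one factor of each degenerate pair to zero and keep the other nonnegative is exactly the detail the paper leaves implicit.
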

\begin{proof}
	Suppose $k^{*}$ is a feasible point of MPEC, and satisfies the MPEC reverse convex constraint qualification at $k^{*}$.If we take $d \in \Psi(k^{*})$, then we get for any $i \in I_{\ell}$:
	$$ \langle \xi,d \rangle \leq 0, \ \  \forall \xi \in \partial^{T}\ell_{i}(k^{*}).$$
	Thus, by using dini derivative's definition, we have
	$$0 \leq \lim\limits_{h\to 0}inf  \frac{- \ell_{i}(k^{*}+hd)}{h}\leq \lim\limits_{h\to 0}sup \frac{- \ell_{i}(k^{*}+hd)}{h}.$$
	Hence,
	$$0 \leq(- \ell_{i})^{+}(k;d) \leq  \sup\limits_{\xi \in \partial^{T}(- \ell_{i})(k^{*})} \langle \xi,d \rangle.$$
	Then, there exist $\xi_{0} \in \partial^{T}(- \ell_{i})(k^{*})$ such that, $\langle \xi_{0},d \rangle \geq 0.$ Now using $\partial^{T}$-pseudoconcavity of each $ \ell_{i}(i \in I_{ \ell})$, we have for all $h > 0,$
	$$ \ell_{i}(k^{*}+hd) \leq 0 =  \ell_{i}(k^{*}), \ \ for \ \ each \ \  i \in I_{ \ell}.$$
	Continuity of all $ \ell_{i}(i \notin I_{ \ell})$ at $k^{*}$  we have,
	$$ \ell_{i}(k^{*}+hd) < 0, \ \ for \ \ each \ \ i \notin I_{ \ell} \ \and 
	\ \ h>0 \ \ small \ \  enough \ \  .$$
	By appling same argument, we can find the scalar $\delta$ such that for $h \in (0, \delta)$,\\
	$ \ell_{i}(k^{*}+hd) \leq 0, i=1...,p$,\\
	$ \hbar_{i}(k^{*}+hd) = 0, i=1...,q$,\\
	$G_{i}(k^{*}+hd) = 0, \ \ H_{i}(k^{*}+hd) > 0, \ \ \forall i \in  \Theta,$\\
	$H_{i}(k^{*}+hd) = 0, \ \ G_{i}(k^{*}+hd) > 0, \ \ \forall i \in  \Upsilon,$\\
	$G_{i}(k^{*}+hd) \geq 0, \ \  H_{i}(k^{*}+hd) \geq 0,\ \  G_{i}(k^{*}+hd)H_{i}(k^{*}+hd)=0,  \ \ \forall i \in  \Omega,$
	which implies that $d \in Dcon(K,k^{*}).$ Proof complete.
\end{proof}

\begin{remark}
	Our definitions of constraints qualifications do not directly generalize the smooth version. For smooth function any set that contains the gradient is tangential subdifferential $\partial^T \mathcal{J}(k)= \{\nabla  \mathcal{J}(k)\}$ and it is unique. Therefore, in differential case, tangential subdifferentials of $\Pi(k^{*})$ and $\psi(k^{*})$ can be replaced by set containing $\{\nabla  \mathcal{J}(k)\}$, then all above constraint qualifications are reduced to smooth notion as in \cite{ye-2005,flegel-kanzow2003}.
\end{remark}

\section{Optimality Condition}\label{sec3}
In this section, we obtain some necessary and sufficient optimality conditions for MPEC with the above constraint qualifications and extended stationary point conditions. First,  we present several extended versions of stationary point conditions \cite{Ansari-movahedian-nobakhtian2016} in terms of tangential subdifferentials and suitable generalized convexity assumptions.\
Now we are presenting our dual stationary notions for nonsmooth MPEC, in terms of tangential subdifferentials.\

\begin{definition}\textbf{GA(Generalised Alternatively)-Stationary Point}
	A feasible point $k^{*}$ of MPEC, where all functions are tangentially convex, is said to be GA(Generalised Alternatively )-stationary point if, there are vectors 
	$\lambda=(\lambda^{ \ell},\lambda^{ \hbar},\lambda^{G},\lambda^{H}) \in \mathbb{R}^{p+q+2m}$ and $\mu=(\mu^{ \hbar},\mu^{G},\mu^{H}) \in \mathbb{R}^{q+2m}$ such that following holds:\\
	\begin{align}\label{1}
	0 \in \partial^{T} \mathcal{J}(k^{*})+&\sum_{i\in I_{ \ell}} \lambda_{i}^{ \ell}\partial^{T}( \ell_{i}(k^{*}))+\sum_{j= 1}^{q}[\lambda_{j}^{ \hbar}\partial^{T}( \hbar_{j}(k^{*}))+\mu_{j}^{ \hbar}\partial^{T}(- \hbar_{j})(k^{*})] \nonumber \\ &+\sum_{i=1}^{m}[\lambda_{i}^{G}\partial^{T}(-G_{i})(k^{*})+\lambda_{i}^{H}\partial^{T}(-H_{i})(k^{*})]\nonumber\\&+\sum_{i=1}^{m}[\mu_{i}^{G}\partial^{T}(G_{i})(k^{*})+\mu_{i}^{H}\partial^{T}(H_{i})(k^{*})],
	\end{align}
	\begin{equation}\label{2}
	\lambda^{ \ell}_{I_{ \ell}} \geq 0, \ \  \lambda^{ \hbar}_{j}, \mu^{ \hbar}_{j} \geq 0, \ \  j=1,...,q, \ \ \lambda^{G}_{i},\lambda^{H}_{i}, \mu^{G}_{i},\mu^{H}_{i}\geq 0, \ \ i=1,...,m,
	\end{equation}
	\begin{equation}\label{3}
	\mu^{G}_{\Upsilon}=\mu^{H}_{\Theta}=\lambda^{G}_{\Upsilon}=\lambda^{H}_{\Theta}=0,
	\end{equation}
	with $$\mu^{G}_{i}=0  \ \ or \ \ \mu^{H}_{i}=0, \ \ \forall i \in \Omega.$$
\end{definition}
\begin{definition}\textbf{GS(Generalised Strong)-stationary point}
	A feasible point $k^{*}$ of MPEC, where all functions are tangentially convex, is called GS(Generalised Strong)-stationary point if, there are vectors $\lambda=(\lambda^{ \ell},\lambda^{ \hbar},\lambda^{G},\lambda^{H}) \in \mathbb{R}^{p+q+2m}$ and $\mu=(\mu^{ \hbar},\mu^{G},\mu^{H}) \in \mathbb{R}^{q+2m}$ such that it satisfies the above Conditions (\ref{1}-\ref{3}) together with following condition:
	$$\mu^{G}_{i}=0, \ \ \mu^{H}_{i}=0, \ \ \forall i \in \beta.$$
\end{definition}
\begin{remark}
	If all the functions are differentiable then tangential subdifferentials can be repleced by usual derivative in above stationary concept.
\end{remark}
So by above definitions it is clear that:
$$GS-stationary~~point  \implies GA-stationary~~point.$$
Now, we are focusing on necessary and sufficient optimality conditions for nonsmooth MPEC, under the notion of tangential subdifferentials.
\begin{theorem}\label{5.1}
	Let $k^{*}$	be a local optimal solution of MPEC. Suppose  $\mathcal{J}$ is tangentially convex and locally Lipschitz at $k^{*}$. Assume that GS-ACQ at $k^{*}$ and the cone 
	\begin{equation*}
	\Delta =cone \ \  \ \  \ell + cone \ \  \ \  \hbar + cone\ \   \ \  G_{\Theta} + cone  \ \   \ \ H_{\Upsilon} + cone \ \ \ \  (GH)_{\Omega},
	\end{equation*}
	is closed. Then $k^{*}$ is GS-stationary point.
\end{theorem}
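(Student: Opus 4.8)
I would first show $\mathcal{J}'(k^*,d)\ge 0$ for every $d\in T(K,k^*)$. Tangential convexity of $\mathcal{J}$ at $k^*$ ensures the directional derivative exists in every direction. Given $d\in T(K,k^*)$, choose $t_z\downarrow 0$ and $u_z\to d$ with $k^*+t_zu_z\in K$; local optimality gives $\mathcal{J}(k^*+t_zu_z)\ge\mathcal{J}(k^*)$ for large $z$, and the local Lipschitz property, through $|\mathcal{J}(k^*+t_zu_z)-\mathcal{J}(k^*+t_zd)|\le Lt_z|u_z-d|$, lets one pass to the limit in $\frac{1}{t_z}\big(\mathcal{J}(k^*+t_zd)-\mathcal{J}(k^*)\big)$ and conclude $\mathcal{J}'(k^*,d)\ge 0$. \textbf{Step 2.} GS Abadie CQ gives $\Pi(k^*)\subseteq T(K,k^*)$, hence $\mathcal{J}'(k^*,d)\ge 0$ on $\Pi(k^*)$; since $\mathcal{J}'(k^*,d)=\max_{\xi\in\partial^T\mathcal{J}(k^*)}\langle\xi,d\rangle$, this says that for every $d\in\Pi(k^*)$ there is some $\xi\in\partial^T\mathcal{J}(k^*)$ with $\langle\xi,d\rangle\ge 0$.

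\textbf{Step 3 (the main obstacle).} The hard part is upgrading that ``for every $d$ there is a $\xi$'' statement to the existence of a single $\xi^*\in\partial^T\mathcal{J}(k^*)$ with $-\xi^*\in\Pi(k^*)^-$. I would argue by contradiction using strict separation: $\partial^T\mathcal{J}(k^*)$ is nonempty, compact and convex (being the tangential subdifferential of a tangentially convex function), while $-\Pi(k^*)^-$ is a nonempty closed convex cone, so if they were disjoint there would exist $d_0$ and $\gamma$ with $\langle\xi,d_0\rangle<\gamma$ for all $\xi\in\partial^T\mathcal{J}(k^*)$ and $\langle w,d_0\rangle>\gamma$ for all $w\in-\Pi(k^*)^-$. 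Because $-\Pi(k^*)^-$ is a cone through the origin, this forces $\gamma<0$ and $\langle v,d_0\rangle\le 0$ for every $v\in\Pi(k^*)^-$, i.e. $d_0$ lies in the bipolar $(\Pi(k^*)^-)^-$, which equals $\Pi(k^*)$ since $\Pi(k^*)$, being an intersection of polar cones, is already a closed convex cone. Then $d_0\in T(K,k^*)$, so Step~1 gives $\max_\xi\langle\xi,d_0\rangle\ge 0$, contradicting $\langle\xi,d_0\rangle<\gamma<0$. Hence the desired $\xi^*$ exists.

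\textbf{Step 4 (polar cone and multipliers).} Put $S:=\ell\cup\hbar\cup G_\Theta\cup H_\Upsilon\cup(GH)_\Omega$, so $\Pi(k^*)=S^-$, whence its polar $\Pi(k^*)^-=(S^-)^-$ is the smallest closed convex cone containing $S$, namely $\mathrm{cl}\,\mathrm{conv}\,\mathrm{cone}(S)$; since $\ell,\hbar,G_\Theta,H_\Upsilon,(GH)_\Omega$ are finite unions of convex sets, $\mathrm{conv}\,\mathrm{cone}(S)=\mathrm{cone}\,\ell+\mathrm{cone}\,\hbar+\mathrm{cone}\,G_\Theta+\mathrm{cone}\,H_\Upsilon+\mathrm{cone}\,(GH)_\Omega=\Delta$, and the closedness hypothesis yields $\Pi(k^*)^-=\Delta$. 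Therefore $-\xi^*\in\Delta$, i.e. $0\in\partial^T\mathcal{J}(k^*)+\Delta$. Expanding this membership cone by cone, $\mathrm{cone}\,\ell$ supplies $\sum_{i\in I_\ell}\lambda_i^\ell\partial^T\ell_i(k^*)$ with $\lambda_i^\ell\ge 0$; $\mathrm{cone}\,\hbar$ the $\lambda_j^\hbar,\mu_j^\hbar\ge 0$ terms; $\mathrm{cone}\,G_\Theta$ the $\lambda_i^G,\mu_i^G\ge 0$ terms for $i\in\Theta$; $\mathrm{cone}\,H_\Upsilon$ the $\lambda_i^H,\mu_i^H\ge 0$ terms for $i\in\Upsilon$; and $\mathrm{cone}\,(GH)_\Omega$ only the $\lambda_i^G,\lambda_i^H\ge 0$ terms for $i\in\Omega$. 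Setting every remaining multiplier to zero produces vectors $\lambda,\mu$ satisfying \eqref{1}, the sign conditions \eqref{2}, the index conditions \eqref{3} (no $G$-multiplier survives on $\Upsilon$, no $H$-multiplier on $\Theta$), together with $\mu_i^G=\mu_i^H=0$ on the degenerate set, which is precisely GS-stationarity. The only remaining work is the bookkeeping of matching these five cones against the definitions of $G_\Theta$, $H_\Upsilon$ and $(GH)_\Omega$ so that the correct sign/index pattern comes out; beyond that the argument is routine.
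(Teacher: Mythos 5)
Your argument is correct and is essentially the paper's own proof in a slightly reorganized form: both hinge on strictly separating the compact convex set $\partial^{T}\mathcal{J}(k^{*})$ from the closed convex cone $-\Delta$, using GS-ACQ to place the separating direction in $T(K,k^{*})$, the local Lipschitz property to contradict local optimality along a feasible sequence, and the same multiplier bookkeeping at the end. The only differences are cosmetic --- you isolate the primal condition $\mathcal{J}'(k^{*},d)\ge 0$ on $T(K,k^{*})$ as a preliminary step and invoke the bipolar theorem to identify $\Pi(k^{*})^{-}=\Delta$, whereas the paper runs the contradiction directly and uses only the inclusion $\Delta^{-}\subseteq\Pi(k^{*})$.
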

\begin{proof}
	Firstly, we will proof that
	$$ 0 \in  \ \ \partial^{T} \mathcal{J}(k^{*})+\Delta.$$
	Suppose it is not possible, then $ \ \ \partial^{T} \mathcal{J}(k^{*}) \cap -\Delta = \phi.$ Since $ \partial^{T} \mathcal{J}(.)$ is compact convex set. Also $\Delta$ is closed and bounded set, then by using convex seperation theorem, there exists a non-zero vector $\nu  \in \mathbb{R}^{n}$ with real number $\rho$ such that,
	$$\langle \xi,\nu \rangle < \rho < \langle \eta,\nu \rangle, \ \ \forall \xi \in \partial^{T} \mathcal{J}(k^{*}), \ \ \eta \in -\Delta.$$ 
	Since -$\Delta$ is cone, that means $\rho =0$, and
	\begin{equation}\label{4}
	\langle \xi,\nu \rangle < 0, \ \  \forall \xi \in  \partial^{T} \mathcal{J}(k^{*}).
	\end{equation}
	Since  $\partial^{T} \mathcal{J}(k^{*})$ is bounded at $k^{*}$, then by (\ref{4}), we have $ \mathcal{J}^{+}(k^{*},\nu) < 0.$ Hence, there exists $\delta > 0 $ such that 
	\begin{equation}\label{5}
	\mathcal{J}(k^{*}+h\nu)<  \mathcal{J}(k^{*}), \ \ \forall h \in (0, \delta).
	\end{equation}
	Also applying the properties of cone on $\Delta,$ we have
	$$\langle \eta,\nu \rangle\leq 0, \ \ \forall \eta \in \Delta.$$
	That means,
	\begin{equation}\label{6}
	\langle \eta_{i}^{ \ell},\nu \rangle\leq 0, \ \ \forall \eta_{i}^{ \ell} \in  \ \  \partial^{T} \ell_{i}(k^{*}), \ \ \forall i \in I_{ \ell}
	\end{equation}
	\begin{equation}\label{7}
	\langle \eta_{i}^{ \hbar},\nu \rangle\leq 0, \ \ \forall \eta_{i}^{ \hbar} \in  \ \  \partial^{T} \hbar_{i}(k^{*}) \cup  \ \ \partial^{T}(-h_{i})(k^{*}) \ \ i=1,...,q,
	\end{equation}
	\begin{equation}\label{8}
	\langle \eta_{i}^{G},\nu \rangle\leq 0, \ \ \forall \eta_{i}^{G} \in  \ \  \partial^{T}G_{i}(k^{*}) \cup  \ \ \partial^{T}(-G_{i})(k^{*}) \ \  \forall i \in \Theta,
	\end{equation}
	\begin{equation}\label{9}
	\langle \eta_{i}^{H},\nu \rangle\leq 0, \ \ \forall \eta_{i}^{H} \in  \ \  \partial^{T}H_{i}(k^{*}) \cup  \ \ \partial^{T}(-H_{i})(k^{*}) \ \  \forall i \in \Upsilon,
	\end{equation}
	\begin{equation}\label{10}
	\langle \eta_{i}^{GH},\nu \rangle\leq 0, \ \ \forall \eta_{i}^{GH} \in  \ \  \partial^{T}(-G_{i})(k^{*}) \cup  \ \ \partial^{T}(-H_{i})(k^{*}) \ \  \forall i \in \Omega.
	\end{equation}
	By $(\ref{6}-\ref{10}),$ we get
	$$\nu \in  \ell^{-} \cap  \hbar^{-} \cap G^{-}_{\Theta} \cap H^{-}_{\Upsilon} \cap(GH)^{-}_{\Omega}=\Pi(k^{*}).$$
	Here, we use GS-ACQ at $k^{*}$ to obtain $\nu \in T(K,k^{*})$, for this we are choosing sequences $t_{z} \rightarrow 0$ and $\nu_{z} \rightarrow \nu$ such that $k^{*}+t_{z}\nu_{nz} \in K$ for each $z \in N.$ Since we initialy considerd that  $\mathcal{J}$ is localy Lipschitz at $k^{*}$ with some modulus $L > 0$ then by this property we have
	\begin{equation}\label{11}
	\mathcal{J}(k^{*}+t_{z}\nu_{z}) \leq  \mathcal{J}(k^{*}+t_{z}\nu)+L\Vert \nu_{z}-\nu\Vert , L > 0.
	\end{equation}
	Then, by relation (\ref{5}) and (\ref{11}), we get for sufficiently large $z$, such that 
	$$ \mathcal{J}(k^{*}+t_{z}\nu_{z}) <  \mathcal{J}(k^{*}).$$
	Above expression contradicts that $k^{*}$ is local minimizer of  $\mathcal{J}$. That means our initial assumption is correct i.e. $ 0 \in  \ \ \partial^{T} \mathcal{J}(k^{*})+\Delta $ is correct.
	Also we can say that, there exists vectors, $\lambda^{ \ell}_{I_{ \ell}} \geq 0, \ \  \lambda^{ \hbar}_{j}, \ \  j=1,...,q, \ \  \lambda^{G}_{i} \geq 0, \ \ i \in \Theta \cup \Omega, \ \  \lambda^{H}_{i} \geq 0, \ \ i \in \Upsilon \cup \Omega, \ \  \mu^{ \hbar}_{j}, \ \  j=1,...,q, \ \  \mu^{G}_{i}, \ \  i \in \Theta, \ \ \mu^{H}_{i}, i \in \Upsilon.$
	such that
	\begin{align}\label{12}
	0 \in \partial^{T} \mathcal{J}(k^{*})+&\sum_{i\in I_{ \ell}} \lambda_{i}^{ \ell}\partial^{T}( \ell_{i}(k^{*}))+\sum_{j= 1}^{q}[\lambda_{j}^{ \hbar}\partial^{T}( \hbar_{j}(k^{*}))+\mu_{j}^{ \hbar}\partial^{T}(- \hbar_{j})(k^{*})] \nonumber \\ &+\sum_{i \in \Theta \cup \Omega}\lambda_{i}^{G}\partial^{T}(-G_{i})(k^{*})+\sum_{i \in \Upsilon\cup\Omega}\lambda_{i}^{H}\partial^{T}(-H_{i})(k^{*})\nonumber\\&+\sum_{i \in \Theta}\mu_{i}^{G}\partial^{T}(G_{i})(k^{*})+\sum_{i \in \Upsilon}\mu_{i}^{H}\partial^{T}(H_{i})(k^{*}).
	\end{align}
	By taking $\lambda^{G}_{\Upsilon}=\lambda^{H}_{\Theta}= \mu^{G}_{\Upsilon \cup \Omega} =\mu^{H}_{\Theta \cup \Omega}=0$ on expression (\ref{12}), we have
	\begin{align*}
	0 \in \partial^{T} \mathcal{J}(k^{*})+&\sum_{i\in I_{ \ell}} \lambda_{i}^{ \ell}\partial^{T}( \ell_{i}(k^{*}))+\sum_{j= 1}^{q}[\lambda_{j}^{ \hbar}\partial^{T}( \hbar_{j}(k^{*}))+\mu_{j}^{ \hbar}\partial^{T}(- \hbar_{j})(k^{*})] \nonumber \\ &+\sum_{i=1}^{m}[\lambda_{i}^{G}\partial^{T}(-G_{i})(k^{*})+\lambda_{i}^{H}\partial^{T}(-H_{i})(k^{*})]\nonumber\\&+\sum_{i=1}^{m}[\mu_{i}^{G}\partial^{T}(G_{i})(k^{*})+\mu_{i}^{H}\partial^{T}(H_{i})(k^{*})],
	\end{align*}
	\begin{align*}
	&\lambda^{ \ell}_{I_{ \ell}} \geq 0, \ \  \lambda^{ \hbar}_{j}, \mu^{ \hbar}_{j} \geq 0, \ \  j=1,...,q, \ \ \lambda^{G}_{i},\lambda^{H}_{i},\mu^{G}_{i},\mu^{H}_{i}\geq 0, \ \ i=1,...,m,\\&\lambda^{G}_{\Upsilon}=\lambda^{H}_{\Theta}= \mu^{G}_{\Upsilon} =\mu^{H}_{\Theta}=0, \\& \mu^{H}_{i}=\mu^{G}_{i}=0 \ \  \forall i \in \Omega.
	\end{align*}
	Thus $k^{*}$ is GS-stationary point. Complete proof.
\end{proof}
By consequence of above theorem following corollary are also true:
\begin{corollary}
	Let $k^{*}$	be a local optimal solution of MPEC. Suppose  $\mathcal{J}$ is tangentially convex and locally Lipschitz at that point. Suppose effective constraint functions are tangentially convex at $k^{*}$. If $k^{*}$ holds GS-ACQ then $k^{*}$ is also GS-statioanry point.
\end{corollary}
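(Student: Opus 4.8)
The plan is to obtain the corollary as a direct specialization of Theorem~\ref{5.1}, since its hypotheses are (essentially) those of that theorem. First I would check that every ingredient needed to run the argument of Theorem~\ref{5.1} is available here: $k^{*}$ is a local minimizer of MPEC; $\mathcal{J}$ is tangentially convex and locally Lipschitz at $k^{*}$; the effective (active) constraint functions are tangentially convex at $k^{*}$, so that all the sets $\partial^{T}\ell_{i}(k^{*})$, $\partial^{T}(\pm\hbar_{j})(k^{*})$, $\partial^{T}(\pm G_{i})(k^{*})$, $\partial^{T}(\pm H_{i})(k^{*})$ are nonempty, convex and compact (as recalled in the preliminaries); and GS-ACQ holds at $k^{*}$, i.e. $\Pi(k^{*})\subset T(K,k^{*})$ with at least one generating dual set nonempty. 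These are precisely the standing assumptions of Theorem~\ref{5.1}.

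The only item of Theorem~\ref{5.1} not written verbatim in the corollary is the closedness of the cone
\[
\Delta = \mathrm{cone}\ \ell + \mathrm{cone}\ \hbar + \mathrm{cone}\ G_{\Theta} + \mathrm{cone}\ H_{\Upsilon} + \mathrm{cone}\ (GH)_{\Omega}.
\]
So the plan is to carry this along as a standing regularity requirement, or to observe that it is automatic in the settings the corollary is meant to cover: when the active constraint functions are, say, differentiable or piecewise affine at $k^{*}$, each generating set is a singleton or a polytope and $\Delta$ is a finitely generated convex cone, hence closed by Weyl's theorem (compare the Remark following Theorem~\ref{5.1}). Granting that, I would simply invoke Theorem~\ref{5.1}: no new separation argument is needed, since the compactness of $\partial^{T}\mathcal{J}(k^{*})$, the convex separation theorem, the passage from $\langle\xi,\nu\rangle<0$ to $\mathcal{J}^{+}(k^{*},\nu)<0$, and the contingent-cone plus local-Lipschitz contradiction are all performed there; they yield $0\in\partial^{T}\mathcal{J}(k^{*})+\Delta$, which unpacks into the multiplier system \eqref{1}--\eqref{3} together with $\mu^{G}_{i}=\mu^{H}_{i}=0$ for $i\in\Omega$, that is, GS-stationarity.

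The main obstacle is exactly the closedness of $\Delta$: a sum of finitely many closed convex cones need not be closed, and even $\mathrm{cone}(C)$ for a compact convex $C$ can fail to be closed when the origin lies on the boundary of $C$. Hence the honest reading is that the corollary inherits the closedness hypothesis of Theorem~\ref{5.1} (or replaces it by a further constraint-qualification-type condition guaranteeing it); once that is in place the proof is a one-line appeal to the theorem, and the only remaining work is the bookkeeping translation of $0\in\partial^{T}\mathcal{J}(k^{*})+\Delta$, using the index conventions $\lambda^{G}_{\Upsilon}=\lambda^{H}_{\Theta}=0$ and $\mu^{G}_{\Upsilon\cup\Omega}=\mu^{H}_{\Theta\cup\Omega}=0$, into the displayed GS-stationary conditions.
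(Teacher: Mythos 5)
Your proposal follows the same route as the paper: the corollary is obtained by specializing Theorem~\ref{5.1}, and the only point at issue is the closedness of the cone $\Delta$. The paper's own proof disposes of this in one line, asserting that because $\mathcal{J}$ and the effective constraint functions are tangentially convex at $k^{*}$ they admit bounded tangential subdifferentials there, and that this boundedness makes $\Delta$ closed. Your hesitation on precisely that step is well taken: compactness of the generating sets does not by itself give closedness of $\Delta$. The convex conical hull of a compact convex set can fail to be closed when the origin lies on its boundary (for the disk of radius one centred at $(0,1)$ in $\mathbb{R}^{2}$, the conical hull is the open upper half-plane together with the origin), and a finite sum of closed convex cones need not be closed either. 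So the paper's justification of the corollary has exactly the gap you flag, and your more conservative reading --- either retain the closedness of $\Delta$ as an explicit hypothesis, or verify it in the concrete situation at hand (finitely generated cones, or $0$ outside the convex hulls of the generating sets, in which case the conical hull of a compact set is closed) --- is the mathematically sound version of the argument. Once closedness is secured, your bookkeeping of the multipliers ($\lambda^{G}_{\Upsilon}=\lambda^{H}_{\Theta}=0$, $\mu^{G}_{\Upsilon\cup\Omega}=\mu^{H}_{\Theta\cup\Omega}=0$) matches the paper's, and the corollary is indeed a one-line appeal to Theorem~\ref{5.1}.
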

\begin{proof}
	Since  $\mathcal{J}$ and effective constraint functions are tangentially convex at $k^{*}$ so it admits bounded tangential subdifferentials at $k^{*}$, then
	\begin{equation*}
	\Delta =cone \ \  \ \  \ell + cone \ \  \ \  \hbar + cone\ \   \ \  G_{\Theta} + cone  \ \   \ \ H_{\Upsilon} + cone \ \ \ \  (GH)_{\Omega},
	\end{equation*}
	is closed.	
\end{proof}
Our next example illustrates Theorem \ref{5.1}.
\begin{example}
	Let $\mathcal{J}$ be nondifferentiable objective function as,
	
	$(P) \ \ min \ \  \mathcal{J}(k_{1},k_{2})= |k_{1}|+k_{2}^{3}$
	
	such that $ \ell(k_{1},k_{2})=|k_{2}| \leq 0,  \ \  \hbar(k_{1},k_{2})=0,$
	
	$G(k_{1},k_{2})= k_{1} \geq 0 \ \ H(k_{1},k_{2})=k_{2} \geq 0.$\\
	
	Clearly, we can see that all the functions are tangentially convex function and $(0,0)$ is minimizer of MPEC (P). $\Theta, \ \ \Upsilon$ are empty but $\Omega$ is nonempty.\\
	The directonal derivatives of functions at $0=(0,0)$ in directon $d=(d_{1},d_{2})$ are,\\
	$ \mathcal{J}^{+}(0,d)=|d_{1}|,$\\
	$ \ell^{+}(0,d)=|d_{2}|,$\\
	$(-G)^{+}(0,d)=-d_{1},$\\
	$(-H)^{+}(0,d)=-d_{2}.$\\
	Tangential subdifferentials, are\\
	$\partial^{T} \mathcal{J}(0)=\{(-1,0),(1,0)\},$\\
	$\partial^{T} \ell(0)=\{(0,1),(0,-1)\},$\\
	$\partial^{T}(-G)(0)=\{(-1,0)\},$\\
	$\partial^{T}(-H)(0)=\{(0,-1)\}.$\\
	Hence, we have\\
	$\ell^{-}=\{ (d_{1},d_{2}) | \ \ d_{2}=0\},\ \  (GH)^{-}=\{ (d_{1},d_{2}) | \ \ d_{1} \geq 0, d_{2} \geq 0\}.$\\
	So, GS-ACQ is satisfied at $k^{*}=(0,0),$ i.e. $\Pi(0) \subset T(K,0).$ Also cone  $\Delta$ is closed. For $\lambda^{ \ell}=1 \geq 0, \ \ \lambda^{G}=1 \geq 0$ and $\lambda^{H}=1 \geq 0$ with $\mu^{G}=\mu^{H}=0,$ we have
	
	\begin{align*}
	0 \in \partial^{T} \mathcal{J}(0)&+\lambda^{ \ell}\partial^{T} \ell(0) +\lambda^{G}\partial^{T}(-G)(0)+\lambda^{H}\partial^{T}(-H)(0) \\&+\mu^{G}\partial^{T}G(0)+\mu^{H}\partial^{T}H(0).
	\end{align*}
	That means $0=(0,0)$ is GS-stationary point.
\end{example}
\begin{theorem}
	Suppose $k^{*}$ be a local minimum point of MPEC, and  $\mathcal{J}$ is locally Lipschitz function near $k^{*}.$ Suppose that $\mathcal{J}$ and all effective constraint functions are tangentially convex at $k^{*}$. Then, if $k^{*}$ satisfies MPEC-ACQ, then $k^{*}$ is definitely GA-stationary point.
\end{theorem}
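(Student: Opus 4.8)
The proof will closely follow that of Theorem~\ref{5.1}; the single new feature is that the MPEC--linearized cone $\Psi(k^{*})$ is a union of two convex cones, so the separation argument has to be carried out inside one of the pieces. Write $\Psi(k^{*})=\Psi_{G}\cup\Psi_{H}$, where
\[
\Psi_{G}:=\ell^{-}\cap\hbar^{-}\cap G^{-}_{\Theta}\cap H^{-}_{\Upsilon}\cap(GH)^{-}_{\Omega}\cap G^{-}_{\Omega},
\]
and $\Psi_{H}$ is the same intersection with $G^{-}_{\Omega}$ replaced by $H^{-}_{\Omega}$; each is a closed convex cone contained in $\Psi(k^{*})$. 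Set
\[
\Delta_{G}:=cone\ \ell+cone\ \hbar+cone\ G_{\Theta}+cone\ H_{\Upsilon}+cone\ (GH)_{\Omega}+cone\ G_{\Omega},
\]
and define $\Delta_{H}$ analogously with $cone\ G_{\Omega}$ replaced by $cone\ H_{\Omega}$. By the polar calculus $(\Delta_{G})^{-}=\Psi_{G}$ and $(\Delta_{H})^{-}=\Psi_{H}$, and since $\mathcal{J}$ and the effective constraint functions are tangentially convex at $k^{*}$ their tangential subdifferentials are compact, so---exactly as in the corollary following Theorem~\ref{5.1}---the cones $\Delta_{G}$ and $\Delta_{H}$ are closed.

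First I would show $0\in\partial^{T}\mathcal{J}(k^{*})+\Delta_{G}$. Suppose not. Then the compact convex set $\partial^{T}\mathcal{J}(k^{*})$ is disjoint from the closed convex cone $-\Delta_{G}$, so by the convex separation theorem there is $\nu\neq0$ with $\langle\xi,\nu\rangle<0$ for all $\xi\in\partial^{T}\mathcal{J}(k^{*})$ and $\langle\eta,\nu\rangle\le0$ for all $\eta\in\Delta_{G}$ (the separating constant must vanish because $\Delta_{G}$ is a cone, just as in Theorem~\ref{5.1}). The second family of inequalities says $\nu\in(\Delta_{G})^{-}=\Psi_{G}\subseteq\Psi(k^{*})$, so MPEC--ACQ gives $\nu\in T(K,k^{*})$. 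From the first family, together with $\mathcal{J}^{'}(k^{*},\nu)=\max_{\xi\in\partial^{T}\mathcal{J}(k^{*})}\langle\xi,\nu\rangle$, we get $\mathcal{J}^{'}(k^{*},\nu)<0$, hence $\mathcal{J}(k^{*}+h\nu)<\mathcal{J}(k^{*})$ for all sufficiently small $h>0$. Choosing $t_{z}\downarrow0$ and $\nu_{z}\to\nu$ with $k^{*}+t_{z}\nu_{z}\in K$ and using the local Lipschitz property of $\mathcal{J}$ near $k^{*}$ as in inequality~(\ref{11}), one obtains $\mathcal{J}(k^{*}+t_{z}\nu_{z})<\mathcal{J}(k^{*})$ for all large $z$, contradicting the local optimality of $k^{*}$. (The same argument applies verbatim with $\Delta_{H}$ and $\Psi_{H}$, but a single branch suffices.)

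It remains to read off the multipliers. Expanding each $cone\ (\cdot)$ in $\Delta_{G}$ as a nonnegative combination of the tangential subdifferentials generating it, the inclusion $0\in\partial^{T}\mathcal{J}(k^{*})+\Delta_{G}$ produces $\lambda^{\ell}_{i}\ge0$ $(i\in I_{\ell})$, $\lambda^{\hbar}_{j},\mu^{\hbar}_{j}\ge0$ $(j=1,\dots,q)$, $\lambda^{G}_{i}\ge0$ $(i\in\Theta\cup\Omega)$, $\lambda^{H}_{i}\ge0$ $(i\in\Upsilon\cup\Omega)$, $\mu^{G}_{i}\ge0$ $(i\in\Theta\cup\Omega)$ and $\mu^{H}_{i}\ge0$ $(i\in\Upsilon)$. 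Setting the remaining coordinates to zero, in particular $\lambda^{G}_{\Upsilon}=\lambda^{H}_{\Theta}=\mu^{G}_{\Upsilon}=\mu^{H}_{\Theta}=0$ and $\mu^{H}_{\Omega}=0$, the inclusion becomes precisely~(\ref{1}), the sign conditions~(\ref{2}) hold, and~(\ref{3}) holds because $\mu^{H}_{i}=0$ for every $i\in\Omega$; in particular $\mu^{G}_{i}=0$ or $\mu^{H}_{i}=0$ for each $i\in\Omega$. Hence $k^{*}$ is a GA--stationary point.

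The step I expect to be the main obstacle is handling the union structure of $\Psi(k^{*})$: one cannot simply separate $\partial^{T}\mathcal{J}(k^{*})$ from the single cone $\Delta$ of Theorem~\ref{5.1}, since that would require $\Pi(k^{*})\subseteq T(K,k^{*})$, a strictly stronger hypothesis than MPEC--ACQ. Instead one must observe that running the separation against the enlarged (still closed, by tangential convexity of the effective constraints) cone $\Delta_{G}$ already forces $0\in\partial^{T}\mathcal{J}(k^{*})+\Delta_{G}$, and that this single inclusion yields the GA --- but not necessarily the GS --- stationarity pattern, the extra summand $cone\ G_{\Omega}$ being exactly what lets $\mu^{G}_{\Omega}$ be nonzero while $\mu^{H}_{\Omega}=0$.
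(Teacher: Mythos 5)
Your proof is correct and follows essentially the same route as the paper's: the paper likewise separates $\partial^{T}\mathcal{J}(k^{*})$ from the enlarged cone $\Lambda=\Delta+cone\,(G_{\Omega})$ (your $\Delta_{G}$), uses that its polar lies in $\Psi(k^{*})$ so that MPEC--ACQ applies, derives the contradiction via the local Lipschitz estimate, and then extracts the multipliers with $\mu^{H}_{\Omega}=0$ to obtain GA--stationarity. Your explicit decomposition $\Psi(k^{*})=\Psi_{G}\cup\Psi_{H}$ and the remark that one branch suffices only make explicit what the paper leaves implicit.
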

\begin{proof}
	We claim that
	\begin{equation}\label{13}
	0 \in co \partial^{T} \mathcal{J}(k^{*})+\Lambda,
	\end{equation}
	where $\Lambda =cone \ \ ( \ell \cup  \hbar \cup G_{\Theta} \cup H_{\Upsilon} \cup (GH)_{\Omega}\ \cup G_{\Omega}).$\\
	Suppose by contradiction that (\ref{13}) is not holds. Since $\partial^{T} \mathcal{J}(k^{*})$  is compact and convex and $\Lambda$ is closed and convex. Then as similar proof of Theorem \ref{5.1}, we can find a non zero vector $\nu \in \mathbb{R}^{n}$  with $t_{z} \downarrow 0 \ \  and \ \  \nu_{z} \downarrow \nu$ s.t. for all large enough $z$,
	\begin{equation*}
	\mathcal{J}(k^{*}+t_{z}\nu_{z})<  \mathcal{J}(k^{*}),
	\end{equation*}
	which is contradiction of our initial assumption that $k^{*}$ is local minimum point of  $\mathcal{J}$. Now we observe that,
	\begin{equation*}
	\Lambda= \Delta+ cone \ \  (G_{\Omega}),
	\end{equation*}
	where $\Delta$ is same as we defined in previous Theorem \ref{5.1}.
	This gives us non-negative multipliers $\lambda^{\ell}_{i}, i \in I_{\ell}, \ \  \lambda^{ \hbar}_{j}, \ \  j=1,...,q, \ \  \lambda^{G}_{i}, \ \ i \in \Theta \cup \Omega, \ \  \lambda^{H}_{i}, \ \ i \in \Upsilon \cup \Omega, \ \  \mu^{ \hbar}_{j}, \ \  j=1,...,q, \ \  \mu^{H}_{i}, \ \  i \in \Upsilon, \ \ \mu^{G}_{i}, i \in \Theta \cup \Omega,$ such that
	\begin{align}\label{14}
	0 \in \partial^{T} \mathcal{J}(k^{*})+&\sum_{i\in I_{ \ell}} \lambda_{i}^{ \ell}\partial^{T}( \ell_{i}(k^{*}))+\sum_{j= 1}^{q}[\lambda_{j}^{ \hbar}\partial^{T}( \hbar_{j}(k^{*}))+\mu_{j}^{ \hbar}\partial^{T}(- \hbar_{j})(k^{*})] \nonumber \\ &+\sum_{i \in \Theta \cup \Omega}[\lambda_{i}^{G}\partial^{T}(-G_{i})(k^{*})+\lambda_{i}^{H}\partial^{T}(-H_{i})(k^{*})]\nonumber\\&+\sum_{i \in \Theta \cup \Omega}[\mu_{i}^{G}\partial^{T}(G_{i})(k^{*})+\mu_{i}^{H}\partial^{T}(H_{i})(k^{*})].
	\end{align}
	For $\lambda^{H}_{\Theta}=\lambda^{G}_{\Upsilon}= \mu^{G}_{\Upsilon}=\mu^{H}_{\Theta \cup \Omega}=0$ in (\ref{14}), we have
	\begin{align*}
	0 \in \partial^{T} \mathcal{J}(k^{*})+&\sum_{i\in I_{ \ell}} \lambda_{i}^{ \ell}\partial^{T}( \ell_{i}(k^{*}))+\sum_{j= 1}^{q}[\lambda_{j}^{ \hbar}\partial^{T}( \hbar_{j}(k^{*}))+\mu_{j}^{\hbar}\partial^{T}(- \hbar_{j})(k^{*})] \nonumber \\ &+\sum_{i=1}^{m}[\lambda_{i}^{G}\partial^{T}(-G_{i})(k^{*})+\lambda_{i}^{H}\partial^{T}(-H_{i})(k^{*})]\nonumber\\&+\sum_{i=1}^{m}[\mu_{i}^{G}\partial^{T}(G_{i})(k^{*})+\mu_{i}^{H}\partial^{T}(H_{i})(k^{*})],
	\end{align*}
	\begin{equation*}
	\lambda^{ \ell}_{I_{ \ell}}, \geq 0, \ \ \lambda^{ \hbar}_{j}, \ \  \mu^{ \hbar}_{j},  \ \  j=1,...,q, \ \  \lambda^{G}_{i}, \ \  \lambda^{H}_{i}, \ \  \mu^{G}_{i}, \ \ \mu^{H}_{i} \geq0, \ \ i=1,...,m,
	\end{equation*}
	$\lambda^{G}_{\Theta}=\lambda^{H}_{\Upsilon}= \mu^{G}_{\Upsilon}=\mu^{H}_{\Theta}=0,$\\
	
	$\mu^{H}_{i}=0  \ \ \forall i \in \Omega.$\\
	This shows that $k^{*}$ is GA stationary point of MPEC.
	This is the complete proof of our theorem.
\end{proof}

Our next theorem proof's that, GA-stationary condition will be (global or local) sufficient optimality condition under some special MPEC generalized convexity assumptions.
\begin{theorem}
	Let $k^{*}$ be the feasible point of MPEC and also satisfies the GA-stationary condition. Let we define the following index set at $k^{*}$,
	$$\Omega_{\mu}^{G}=\{i \in \Omega | \ \  \mu_{i}^{H} =0, \mu_{i}^{G}>0\},$$ $$\Omega_{\mu}^{H}=\{i \in \Omega | \ \  \mu_{i}^{G} =0, \mu_{i}^{H}>0\},$$ $$\Theta_{\mu}^{+}=\{i \in \Theta| \ \ \mu_{i}^{G}>0\},$$
	$$ \Upsilon_{\mu}^{+}=\{i \in \Upsilon| \ \ \mu_{i}^{H}>0\}.$$
	Let  $\mathcal{J}$ is $\partial^{T}$-pseudoconvax and $ \ell_{i}, (i \in I_{ \ell}), - \hbar_{i}, \ \  \hbar_{i}, (i=1,...,q), -G_{i}, (i \in \Theta \cup \Omega)$ and $-H_{i} (i \in \Upsilon \cup \Omega)$ are $\partial^{T}$-quasiconvex at $k^{*}$. Then, if  \ \ $\Omega_{\mu}^{G} \cup \mu_{i}^{H} \cup \Theta_{\mu}^{+} \cup \Upsilon_{\mu}^{+}= \phi$,
	so $k^{*}$ is global optimal solution of MPEC.
\end{theorem}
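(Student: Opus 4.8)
The plan is to argue by contradiction, using the strictness that $\partial^{T}$-pseudoconvexity of $\mathcal{J}$ provides. The first step is to read off what the hypothesis $\Omega_{\mu}^{G}\cup\Omega_{\mu}^{H}\cup\Theta_{\mu}^{+}\cup\Upsilon_{\mu}^{+}=\phi$ forces on the multipliers. Since a feasible $k^{*}$ partitions the indices as $\{1,\dots,m\}=\Theta\cup\Omega\cup\Upsilon$, and GA-stationarity already supplies $\mu^{G}_{\Upsilon}=\mu^{H}_{\Theta}=0$ and the complementarity ``$\mu^{G}_{i}=0$ or $\mu^{H}_{i}=0$'' for $i\in\Omega$ together with the sign conditions $(\ref{2})$, emptiness of $\Theta_{\mu}^{+}$ kills $\mu^{G}_{i}$ for $i\in\Theta$, emptiness of $\Upsilon_{\mu}^{+}$ kills $\mu^{H}_{i}$ for $i\in\Upsilon$, and emptiness of $\Omega_{\mu}^{G},\Omega_{\mu}^{H}$ kills both $\mu^{G}_{i}$ and $\mu^{H}_{i}$ on $\Omega$. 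Hence $\mu^{G}_{i}=\mu^{H}_{i}=0$ for all $i$, and (using also $\lambda^{G}_{\Upsilon}=\lambda^{H}_{\Theta}=0$ from $(\ref{3})$) the inclusion $(\ref{1})$ collapses to
\begin{align*}
0 \in \partial^{T}\mathcal{J}(k^{*}) &+\sum_{i\in I_{\ell}}\lambda_{i}^{\ell}\partial^{T}\ell_{i}(k^{*})+\sum_{j=1}^{q}\bigl[\lambda_{j}^{\hbar}\partial^{T}\hbar_{j}(k^{*})+\mu_{j}^{\hbar}\partial^{T}(-\hbar_{j})(k^{*})\bigr]\\
&+\sum_{i\in\Theta\cup\Omega}\lambda_{i}^{G}\partial^{T}(-G_{i})(k^{*})+\sum_{i\in\Upsilon\cup\Omega}\lambda_{i}^{H}\partial^{T}(-H_{i})(k^{*}).
\end{align*}
Only subdifferentials of the functions that the theorem assumes $\partial^{T}$-quasiconvex now appear, which is exactly the point: the terms $\partial^{T}G_{i}$ and $\partial^{T}H_{i}$ are the ones for which feasibility of a competing point gives $G_{i}(k)\ge G_{i}(k^{*})$ — the wrong inequality for quasiconvexity — so their disappearance is the crux of the argument.

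Next I would suppose $k^{*}$ is not globally optimal, i.e.\ there is $k\in K$ with $\mathcal{J}(k)<\mathcal{J}(k^{*})$. Then $\partial^{T}$-pseudoconvexity of $\mathcal{J}$ at $k^{*}$ gives $\langle\xi,k-k^{*}\rangle<0$ for every $\xi\in\partial^{T}\mathcal{J}(k^{*})$. Using $k\in K$ and $k^{*}\in K$ one has $\ell_{i}(k)\le0=\ell_{i}(k^{*})$ for $i\in I_{\ell}$, $(\pm\hbar_{j})(k)=0=(\pm\hbar_{j})(k^{*})$ for $j=1,\dots,q$, $(-G_{i})(k)\le0=(-G_{i})(k^{*})$ for $i\in\Theta\cup\Omega$, and $(-H_{i})(k)\le0=(-H_{i})(k^{*})$ for $i\in\Upsilon\cup\Omega$; applying $\partial^{T}$-quasiconvexity of $\ell_{i}$, $\hbar_{j}$, $-\hbar_{j}$, $-G_{i}$, $-H_{i}$ at $k^{*}$ then yields $\langle\zeta,k-k^{*}\rangle\le0$ for every $\zeta$ in the respective tangential subdifferential.

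Finally I would choose witnesses realizing the displayed inclusion — some $\xi_{0}\in\partial^{T}\mathcal{J}(k^{*})$ and subgradients in each remaining tangential subdifferential — so that their multiplier-weighted sum equals $0$, and pair this identity with $k-k^{*}$. The $\mathcal{J}$-term contributes a strictly negative number, while every other summand is a nonnegative multiplier times a nonpositive inner product, hence $\le0$; the total is therefore strictly negative, contradicting that it equals $0$. So no such $k$ exists and $k^{*}$ is a global optimal solution of MPEC. I do not expect any genuine obstacle here beyond the bookkeeping of the multiplier reduction in the first step; the rest is the standard KKT-sufficiency pairing, and since only global inequalities are used the conclusion is genuinely global.
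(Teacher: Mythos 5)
Your proposal is correct and follows essentially the same route as the paper: $\partial^{T}$-quasiconvexity of the constraint functions gives the nonpositive pairings with $k-k^{*}$, and $\partial^{T}$-pseudoconvexity of $\mathcal{J}$ applied to a witness of the stationarity inclusion finishes the argument (the paper uses the contrapositive of pseudoconvexity where you argue by contradiction, which is the same step). Your explicit verification that the emptiness hypothesis forces $\mu^{G}=\mu^{H}=0$ everywhere is a piece of bookkeeping the paper leaves implicit, and it is done correctly.
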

\begin{proof}
	Assume that $k$ be any feasible point of MPEC. Then $ \ell_{i}(k) \leq 0 =  \ell_{i}(k^{*})$ for all $i \in I_{ \ell},$ and by $\partial^{T}$-qausiconvaxity of $ \ell_{i}$ at $k^{*}$ we have,
	\begin{equation}\label{15}
	\langle \tau_{i},k-k^{*}\rangle \leq 0, \ \ \forall \tau_{i} \in \partial^{T} \ell_{i}(k^{*}) \ \ \forall i \in I_{ \ell}.
	\end{equation}
	By same argument, we have
	\begin{equation}
	\langle \zeta_{i},k-k^{*}\rangle \leq 0, \ \ \forall \zeta_{i} \in \partial^{T}\hbar_{i}(k^{*}) \ \ \forall i=1,...,q.
	\end{equation}
	\begin{equation}
	\langle \nu_{i},k-k^{*}\rangle \leq 0, \ \ \forall \nu_{i} \in \partial^{T}(-\hbar_{i})(k^{*}) \ \ \forall i=1,...,q.
	\end{equation}
	\begin{equation}
	\langle \sigma_{i},k-k^{*}\rangle \leq 0, \ \ \forall \sigma_{i} \in \partial^{T}(-G_{i})(k^{*}) \ \ \forall i \in \Theta \cup \Omega.
	\end{equation}
	\begin{equation}\label{19}
	\langle \xi_{i},k-k^{*}\rangle \leq 0, \ \ \forall \xi_{i} \in \partial^{T}(-H_{i})(k^{*}) \ \ \forall i \in \Upsilon \cup \Omega.
	\end{equation}
	Now if we consider $\Omega_{\mu}^{G} \cup \Omega_{\mu}^{H} \cup \Theta_{\mu}^{+} \cup \Upsilon_{\mu}^{+}= \phi$ and multiplying (\ref{15}-\ref{19}) by $\lambda_{i}^{ \ell} \ \ i \in I_{ \ell}, \ \  \lambda_{i}^{ \hbar}, \ \ \mu_{i}^{ \hbar}, i=1,...,q, \ \ \lambda_{i}^{G}, \ \  i \in \Theta \cup \Omega, \ \  \lambda_{i}^{H}, i \in \Upsilon \cup \Omega$, respectively, and adding all we have
	\begin{equation*}
	\Bigg\langle \sum_{i \in I_{ \ell}} \lambda_{i}^{ \ell}\tau_{i}+\sum_{j=1}^{q}[\lambda_{i}^{ \hbar}\zeta_{i}+\mu_{i}^{\hbar}\nu_{i}]+\sum_{j=1}^{m}[\lambda_{i}^{G} \sigma_{i}+\lambda_{i}^{H} \xi_{i}], k-k^{*} \Bigg\rangle\leq 0,
	\end{equation*}
	such that $\tau_{i} \in \partial^{T} \ell_{i}(k^{*}), \ \  \zeta_{i} \in \partial^{T} \hbar_{i}(k^{*}), \ \ \nu_{i} \in \partial^{T}(- \hbar_{i})(k^{*}), \ \ \sigma_{i} \in \partial^{T}(-G)_{i}(k^{*}), \ \  \xi_{i} \in \partial^{T}(-H)_{i}(k^{*})$. Initialy we assumed that $k^{*}$ satisfies GA-stationary condition, so we can select $\theta \in \partial^{T} \mathcal{J}(k^{*})$ such that $\langle \theta, k-k^{*} \rangle \geq 0$, then by $\partial^{T}$-pseudoconvexity of  $\mathcal{J}$ at $k^{*}$ we have $ \mathcal{J}(k) \geq  \mathcal{J}(k^{*})$ for all feasible point $k$. Hence $k^{*}$ is global minimum point of MPEC (P) problem.
\end{proof}

\section{Conclusion}\label{sec13}
In this paper, we obtained optimality conditions and various relationships between MPEC constraint qualifications in nonsmooth MPEC, using tangential subdifferentials for the class of tangentially convex functions. We established some useful results that confirm the optimality of any feasible point for nonsmooth mathematical programming problems with equilibrium constraints (MPEC). Using the concept of tangential subdifferentials, we can also modify past results where other subdifferentials are unable. Mathematical programs with equilibrium constraints are very important in fields like networking problems, applied mathematics, engineering, economics, and nonsmooth modeling, etc.

\section*{Declarations}
\subsection*{Ethical Approval}
Not Applicable.

\subsection*{Competing Interest}
The authors declare that they have no competing interests.
\subsection*{Author's Contributions}
The authors declare that the study was realized in collaboration with equal responsibility. All authors read and approved the final manuscript.
\subsection*{Funding}
The first author is financially supported by Research Grant for Faculty (IoE) under Dev. Scheme No. 6031. The second author is financially supported by CSIR-UGC JRF, New Delhi, India through Reference No. 20161014569.
\subsection*{Availability of Data and Materials}
Data sharing not applicable to this paper as no data sets were generated or analyzed during the current study.
\subsection*{Consent for participate}
Not Applicable.
\subsection*{Consent for publication}
Not Applicable.
\subsection*{Code availability}
Not Applicable.



\end{document}